\newlength\tindent
\numberwithin{equation}{section}
\theoremstyle{plain}
\newtheorem{theorem}{Theorem}
\theoremstyle{plain}
\newtheorem{theoremm}{Theorem}
\newtheorem{lemma}{Lemma}
\theoremstyle{definition}
\newtheorem{proof}{}
\newtheorem{remark}{Remark}
\begin{document}

\title{Omega-theorems for the Riemann zeta function and its derivatives near the line $\mathrm{Re}\,s=1$.}
\author{Alexander Kalmynin}
\address{National Research University Higher School of Economics, Russian Federation, Math Department, International Laboratory of Mirror Symmetry and Automorphic Forms}
\email{alkalb1995cd@mail.ru}
\date{}
\udk{}
\maketitle
\renewcommand{\abstract}{\begin{bf}{Abstract.}\end{bf}}
\markright{Omega-theorems for the Riemann zeta function}

\footnotetext[0]{The author is partially supported by Laboratory of Mirror Symmetry NRU HSE, RF Government grant, ag. № 14.641.31.0001, the Simons Foundation and the Moebius Contest Foundation for Young Scientists}

\begin{abstract}
We introduce a generalization of the method of S.\,P.\,Zaitsev \cite{Z}. This generalization allows us to prove omega-theorems for the Riemann zeta function and its derivatives in some regions near the line $\mathrm{Re}\,s=1$.
\end{abstract}
\begin{fulltext}

\section{Introduction}
It is well known that the estimates for the absolute value of the Riemann zeta function $\zeta(s)$ inside the critical strip $0<\mathrm{Re}\,s<1$ have many applications in number theory (see, for example \cite{VK},\cite{Ivic}). At the same time, there exist a number of omega-theorems which show that the quantity $|\zeta(s)|$ can attain very large values in this domain. For example, E.\,C.\,Titchmarsh \cite{Tit} showed that for any fixed $0<\sigma<1$ and arbitrary $\varepsilon>0$ the relation
$$|\zeta(\sigma+it)|=\Omega(\exp(\varphi(t))),\,|t| \to \infty$$
\noindent holds, where $\varphi(t)=(\log |t|)^{1-\sigma-\varepsilon}$.
Later,  N.\,Levinson \cite{Lev} (see also \cite{V}) and H.\,L.\,Montgomery \cite{Montg} proved stronger propositions corresponding to
$$\varphi(t)=c_1(\sigma)\frac{(\log|t|)^{1-\sigma}}{\log\log|t|} \text{ and } \varphi(t)=c_2(\sigma)\frac{(\log|t|)^{1-\sigma}}{(\log\log|t|)^{\sigma}}$$
\noindent respectively ($c_1$ and $c_2$ are some positive constants dependent on $\sigma$).
In the case $\sigma=1$ it is known (cf. \cite{Littlewood}) that 
$$|\zeta(1+it)|=\Omega(\log\log|t|)$$ 
and under assumption of the Riemann hypothesis the same upper bound is true.
\\
In this work, we will study the large values of $|\zeta(s)|$ in the domains of the form \\$\{s=\sigma+it:\,\sigma(t)\leq\sigma<1\}$, where $\sigma(t)$ is some function which tends to 1 monotonically as $|t|\to \infty$. Our results generalize the following theorem by S.\,P.\,Zaitsev \cite{Z}:
\begin{theoremm}
Let $T$ and $\varepsilon$ be positive real numbers. Denote
$$\Sigma_\varepsilon(T)=\{s=\sigma+it : 1-(4+\varepsilon)\frac{\log\log\log t}{\log\log t}\leq \sigma\leq 1, t_0\leq |t|\leq T\},$$
\noindent where $t_0$ is some fixed positive number. Then the inequality
$$\limsup_{\substack{s\in\Sigma_\varepsilon(T) \\ T\to \infty}} \frac{|\zeta(s)|}{\log T}\geq 1$$
holds.
\end{theoremm}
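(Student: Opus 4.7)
The plan is to combine Dirichlet's simultaneous approximation theorem with a truncated Euler-product lower bound for $\log|\zeta(\sigma+it)|$, in the spirit of classical $\Omega$-arguments on the line $\mathrm{Re}\,s=1$. First, I would apply the Dirichlet--Kronecker theorem to the family $\{(\log p)/(2\pi):p\text{ prime},\,p\leq y\}$ for a cutoff $y=y(T)$ to be chosen, obtaining a real number $t\in[t_0,T]$ with $\|t\log p/(2\pi)\|<\delta$ for every prime $p\leq y$; the theorem allows any $\delta$ satisfying $\log(1/\delta)\leq\log T/\pi(y)$. For such $t$ one has $\cos(t\log p)\geq 1-O(\delta^{2})$ for $p\leq y$, so $p^{-it}$ is very close to $1$ on this set.

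Next, write $\sigma=1-\eta$ and analyze the partial sum $S:=\sum_{p\leq y}p^{-\sigma}\cos(t\log p)$. The Dirichlet step gives $S\geq(1-O(\delta^{2}))\sum_{p\leq y}p^{\eta-1}$, and partial summation with the prime number theorem yields $\sum_{p\leq y}p^{\eta-1}\sim y^{\eta}/(\eta\log y)$ whenever $\eta\log y$ is not small. I would then choose $y$ near the Dirichlet ceiling, namely $y\sim\log T\cdot\log\log T/\log\log\log T$ so that $\log y\sim\log\log T$, and $\eta$ of the order $c\log\log\log t/\log\log t$ with a constant $c$ comparable to the boundary value $4+\varepsilon$. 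Then $y^{\eta}=(\log\log T)^{c}$ and $S\gtrsim(\log\log T)^{c-1}/\log\log\log T$, which exceeds $\log\log T$.

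The final step is to pass from the lower bound on $S$ to a lower bound on $\log|\zeta(\sigma+it)|$ itself. Inside the Vinogradov--Korobov zero-free region (which strictly contains $\Sigma_\varepsilon(T)$), $\log\zeta(s)$ is analytic, and its Euler-product expansion $\sum_{p,k\geq 1}(kp^{ks})^{-1}$, absolutely convergent only for $\mathrm{Re}\,s>1$, can be controlled by either a Perron-type contour shift from a line $\mathrm{Re}\,s=1+\kappa$ down to $\sigma$, or by bounding the tail directly. Both the prime-power contribution and the tail $\sum_{p>y}$ can be shown to be $O(1)$, which yields $\log|\zeta(\sigma+it)|\geq S-O(1)\geq\log\log T$ and hence $|\zeta(\sigma+it)|\geq\log T$, as required.

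I expect the main obstacle to be this third step: justifying the truncation of the Euler product in a neighbourhood of $\sigma=1$ where the series diverges, while tracking explicit constants so that the resulting $\sigma=1-\eta$ still lies in $\Sigma_\varepsilon(T)$. The balance between the accuracy $\delta$ (governing how close the cosines are to $1$), the ceiling on $y$ imposed by Dirichlet, and the size of the prime-power and tail corrections in the transfer step is precisely what produces the specific constant $4+\varepsilon$ in the statement rather than the heuristic $1+\varepsilon$ one obtains by ignoring the remainders.
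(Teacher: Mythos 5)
Your proposal follows the classical Diophantine (Dirichlet--Kronecker resonance) route, which is genuinely different from the method behind this theorem, but it contains a fatal error precisely at the step you flag as the main obstacle. You assert that the Vinogradov--Korobov zero-free region ``strictly contains $\Sigma_\varepsilon(T)$''; the inclusion goes the other way. The width of $\Sigma_\varepsilon(T)$ is $\asymp \log\log\log t/\log\log t$, whereas the Vinogradov--Korobov region has width only $\asymp (\log t)^{-2/3}(\log\log t)^{-1/3}$, which tends to $0$ incomparably faster. So the bulk of $\Sigma_\varepsilon(T)$ lies far to the left of any known zero-free region, $\log\zeta$ is not known to be analytic there, and the contour-shift/Euler-product transfer from $S=\sum_{p\le y}p^{-\sigma}\cos(t\log p)$ to $\log|\zeta(\sigma+it)|$ is simply unavailable: the point $\sigma+it$ your resonance step produces could a priori sit next to a zero of $\zeta$. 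Moreover, even if you retreat to the part of $\Sigma_\varepsilon(T)$ that \emph{is} inside the zero-free region, i.e.\ take $\eta=1-\sigma\ll(\log t)^{-2/3}$, then $y^{\eta}=1+o(1)$ for any admissible $y\le \log T\log\log T$, so $\sum_{p\le y}p^{\eta-1}\asymp\log\log y\asymp\log\log\log T$, which falls hopelessly short of the target $\log\log T$. The entire content of the theorem is that one can certify large values at a depth far exceeding the zero-free region, and that is exactly what a pointwise Euler-product lower bound cannot do. (A secondary warning sign: your bookkeeping would yield the constant $1+\varepsilon$ in place of $4+\varepsilon$, which already suggests the losses are not where you locate them.)

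The actual proof (Zaitsev's method, generalized in this paper) never touches $\log\zeta$ or the zeros at all. One assumes $|\zeta(s)|\le(1-c)\log t$ throughout $\Sigma_\varepsilon(T)$ and exponentiates $\zeta$ itself through an entire function with positive Taylor coefficients (here $M_G(1+\zeta(s))$ with a suitable $G$), obtaining a Dirichlet series $f$ with nonnegative coefficients satisfying $|f(\sigma+it)|\ll|t|$ in the region. A truncated Perron formula with the contour pushed to $\mathrm{Re}\,s=\sigma(|t|)$, combined with an averaging (``asymptotic differentiation'') step, bounds the coefficients of $f$ above by roughly $x^{(1+\sigma(x))/2}$ times small factors; multiplicativity applied to the squarefree integer $n=\prod_{p\le m}p$ bounds them below by $\exp\bigl(\log k\cdot\frac{\log x}{\log\log x}-G^{*}(k)\bigr)$. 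The contradiction between these two bounds is where the theorem lives, and the $x^{(1+\sigma(x))/2}$ exponent is the source of the constant $4$. If you want to pursue the resonance route, you would need a mechanism (as in Titchmarsh or Montgomery, e.g.\ averaging $\log|\zeta|$ over circles together with the fact that $\log|\zeta|$ cannot be too negative on average) to replace the pointwise transfer, and even then such arguments are not known to reach regions as wide as $\Sigma_\varepsilon(T)$ with a bound as strong as $\log T$.
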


We will generalize the method of proof devised by the author of \cite{Z} and show that the following proposition is true:

\begin{theorem}
Let $n\geq 0$ be some fixed integer and $\varepsilon,\delta,A,\alpha$ be some positive constants with $0<\alpha<1$, $A>0$, $\varepsilon>0$ and $\delta>0$. Then for any pair of functions $(F,\sigma)$, where
$$F(t)=\exp((\log\log t)^{1+\varepsilon/2-\delta}),\, \sigma(t)=1-(4+\varepsilon)\frac{\log\log\log t}{\log\log t},$$
$$F(t)=\exp\exp\left(\frac{\log\log t}{\log\log\log t}\right),\, \sigma(t)=1-\frac{2+o(1)}{\log\log\log t},$$
$$F(t)=(\log\log t)^A,\, \sigma(t)=1-(2+o(1))\frac{\log\log\log t}{\log\log t}$$
or
$$F(t)=\exp\exp((\log\log t)^\alpha),\, \sigma(t)=1-\frac{2+o(1)}{(\log\log t)^{1-\alpha}}$$
\noindent and subset
$$\Sigma(\sigma,T)=\{s=\sigma+it : \sigma(t)\leq \sigma\leq 1, t_0(\sigma)\leq t\leq T\}\subset \mathbb C$$
\noindent we have the inequality
$$\limsup_{\substack{s\in\Sigma(\sigma,T)\\ T\to \infty}} \frac{|\zeta^{(n)}(s)|}{F(t)}\geq 1,$$
\noindent where $t_0(\sigma)$ is a positive real number that depends only on $\sigma$ and decrease in all the $o$ symbols depends on $\varepsilon,A,\alpha,\delta$ and $n$.
\end{theorem}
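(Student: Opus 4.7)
The plan is to generalise the method of Zaitsev \cite{Z}, who combines Dirichlet's theorem on simultaneous Diophantine approximation with an approximate Dirichlet-polynomial formula for $\zeta(s)$ inside the strip. The aim is to produce $(\sigma,t)\in\Sigma(\sigma,T)$ at which $\zeta^{(n)}(\sigma+it)$ is comparable with $P^{(n)}(\sigma)$, where
$$P(s) = \prod_{p\leq y}(1-p^{-s})^{-1}$$
is the truncated Euler product over primes up to a threshold $y=y(T)$. Logarithmic differentiation of $P$ at the real point $s=\sigma$ yields $|P^{(n)}(\sigma)|\geq P(\sigma)\geq\exp\bigl(\sum_{p\leq y}p^{-\sigma}\bigr)$, and in each of the four cases of the theorem $y$ is chosen so that this exceeds $F(T)$. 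This reduces to verifying $\sum_{p\leq y}p^{-\sigma(T)}\geq\log F(T)+O(1)$, an elementary prime-sum estimate via $\sum_{p\leq y}p^{-\sigma}\sim y^{1-\sigma}/((1-\sigma)\log y)$ when $(1-\sigma)\log y\to\infty$, and $\sim\log\log y$ when it is bounded.

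The ordinate $t$ is produced in two stages. First, Dirichlet's theorem applied to $\{\log p/(2\pi):p\leq y\}$ yields a subset $S\subset[1,T]$ of ordinates with $\|t\log p/(2\pi)\|\leq\eta$ for every prime $p\leq y$, provided $T\geq\eta^{-\pi(y)}$; at any $t\in S$ one has $P^{(n)}(\sigma+it)=(1+O(\eta\log y))P^{(n)}(\sigma)$. Second, to transfer this bound from $P^{(n)}$ to $\zeta^{(n)}$, I would differentiate the Euler--Maclaurin identity $\zeta(s)=\sum_{m\leq N}m^{-s}+N^{1-s}/(s-1)+O(|s|N^{-\sigma})$ $n$ times with $N$ a suitable power of $T$, and split the resulting Dirichlet polynomial according to whether $P^+(m)\leq y$ or $P^+(m)>y$. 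The $y$-smooth part reproduces $P^{(n)}(s)$ (up to a sign and a tail negligible for $N\gg y^{O(1)}$), while the $y$-rough part is controlled by a mean-square estimate on $S$: via Parseval on the factor $\mathbb{T}^{\pi(X)-\pi(y)}$ of residual phases, using $\mathbb{Q}$-linear independence of $\{\log p\}$, its $L^2$ average over $S$ is bounded by $\sum_{P^+(m)>y}(\log m)^{2n}m^{-2\sigma}$, a convergent prime-sum quantity of size $O(y^{-\epsilon})$ for $\sigma$ in our range. A Chebyshev-type argument then supplies $t\in S$ at which the rough contribution is dominated by $|P^{(n)}(\sigma)|$.

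The main obstacle is the simultaneous balancing of the parameters $y$, $\eta$, and $N$. Dirichlet's theorem forces $\pi(y)\log(1/\eta)\leq\log T+O(1)$; the alignment surviving differentiation forces $\eta\log y=o(1)$; and the Euler--Maclaurin remainder after $n$ differentiations forces $N$ large enough that $|s|N^{-\sigma(T)}(\log N)^n$ is small compared to $F(T)$. The four cases in the statement correspond to distinct choices of $y$ within this feasibility region, distinguished by whether $(1-\sigma(T))\log y$ is large, bounded, or tends to zero. The book-keeping to cover all four cases, rather than any essentially new analytic idea, is the bulk of the remaining technical work.
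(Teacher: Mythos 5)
Your route is not the one the paper takes. The paper argues by contradiction: it composes $1+\zeta^{(n)}(s)$ with an entire function $M_G(z)=\sum_m z^m m^{-2}e^{-G^*(m)}$ having positive Taylor coefficients, so that the resulting Dirichlet series has nonnegative coefficients which are provably large (by counting factorizations of a primorial, Theorem 3) and yet, under the assumed bound on $\zeta^{(n)}$, is $O(|t|)$ in the region, whence its coefficients are small by a truncated-Perron and contour-shift argument (Theorem 2). No Diophantine approximation enters anywhere. A different route would be acceptable, but yours has a gap I do not see how to close.

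The gap is your second stage, where the $y$-rough part of the Dirichlet polynomial is controlled by an $L^2$ average over $S$. First, the Euler--Maclaurin truncation $\zeta(s)=\sum_{m\le N}m^{-s}+N^{1-s}/(s-1)+O(|s|N^{-\sigma})$ is only valid for $N\gg|t|$, so the rough part is a Dirichlet polynomial of length $T^{O(1)}$; it cannot be shortened without zero-density or convexity input. Second, Dirichlet's theorem produces a single ordinate (or a discrete set), not a set over which one can average; in the measure-theoretic version, $S=\{t\le T:\|t\log p/2\pi\|\le\eta,\ p\le y\}$ has measure roughly $(2\eta)^{\pi(y)}T$, smaller than $T$ by a factor $\exp(-c\log T/\log\log T)$ at least. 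A Parseval or orthogonality bound for a polynomial of length $T^{O(1)}$ restricted to so sparse a set would require quantitative equidistribution of the phases $(t\log p/2\pi)$ for all primes $p\le T^{O(1)}$ conditioned on $S$; the relevant Weyl integrals are controlled only by lower bounds for linear forms $\bigl|\sum k_p\log p\bigr|$, which for frequencies supported on primes up to $T^{O(1)}$ are merely $T^{-O(1)}$ and yield error terms that swamp the measure of $S$. Even the crude bound (average over $S$) $\le$ (average over $[0,T]$) $\cdot\, T/|S|$ loses the factor $\exp(c\log T/\log\log T)$ and is useless. Moreover, the claimed mean-square bound $\sum_{P^+(m)>y}(\log m)^{2n}m^{-2\sigma}$ is false as stated: pairs $m\neq m'$ sharing the same $y$-rough part but with different $y$-smooth parts satisfy $(m/m')^{it}\approx 1$ on $S$ by construction, so those off-diagonal terms reinforce rather than cancel. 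This is precisely the obstruction that forces the classical arguments of Titchmarsh, Levinson and Voronin through $\log\zeta$ with zero-density input, or through resonance/convexity devices; your outline does not engage with it, and the positivity method of the paper is designed exactly to sidestep it.
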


\section{Auxiliary results}

In this section, we prove two theorems about the coefficients of Dirichlet series which will be used to prove the Theorem 1. Both propositions can be of interest by themselves and, due to the high level of generality, also applicable in other situations.

The first theorem gives an estimate for the coefficients of Dirichlet series in terms of its singularities and order of growth in some region inside the critical strip:
\begin{theorem}

Let $f(s)=\sum\limits_{n=1}^{+\infty} a_n n^{-s}$ be some Dirichlet series with nonnegative coefficients which is absolutely convergent in the domain $\mathrm{Re}\,s>1$. Suppose that $f$ admits an analytic continuation to the region
$$\sigma\geq \sigma(|t|)>\delta>0, s\neq 1$$
and in the neighbourhood of $s=1$ the inequality
$$f(s)\ll F\left(\frac{1}{|s-1|}\right)$$
\noindent holds, where $\sigma:\mathbb R_{>0}\to\mathbb R_{>0}$ is a continuous nondecreasing function such that for large enough $x$ we have $\sigma(x)\leq 1-\frac{2\log\log x}{\log x}$ and $F(x)$ is a postive increasing functions which grows at least as fast as $x$.

If for all sufficiently large $t$ and $\sigma\geq\sigma(|t|)$ we have $$|f(\sigma+it)|\ll |t|,$$ then the bound
$$\sup_{n \leq x} a_n \ll x^{(\sigma(x)+1)/2}\sqrt{\log x}\sqrt{F_{\infty}(2x)}$$
is true, where $F_\infty(x)=\inf\limits_{\varepsilon>0} x^\varepsilon F\left(\frac{1}{\varepsilon}\right)$.

\end{theorem}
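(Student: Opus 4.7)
The plan is to bound each coefficient $a_N$ via a smoothed Perron-type identity, exploiting the positivity $a_n\geq 0$. For any nonnegative weight $\phi$ with $\phi(1)=1$ we have $a_N\leq\sum_n a_n\phi(n/N)$. I would use the Gaussian-in-log weight $\phi_T(u)=\exp(-T^2(\log u)^2/2)$, whose Mellin transform
$$
\hat\phi_T(s)=\frac{\sqrt{2\pi}}{T}\exp\!\left(\frac{s^2}{2T^2}\right)
$$
is entire and satisfies $|\hat\phi_T(\sigma+it)|\ll T^{-1}\exp(-t^2/(2T^2))$ on vertical lines. Mellin inversion then gives
$$
a_N\leq\sum_n a_n\phi_T(n/N)=\frac{1}{2\pi i}\int_{(c)} f(s)\,N^s\,\hat\phi_T(s)\,ds\qquad (c>1).
$$

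Next I would shift the contour leftward to a curve $\Gamma$ consisting of the vertical line $\mathrm{Re}\,s=\sigma(N)$ (truncated at $|\mathrm{Im}\,s|\asymp N$ to stay in the analyticity region) together with a small detour of radius $r$ around $s=1$. The Gaussian decay of $\hat\phi_T$ in $t$ renders the closing horizontal segments and the portion of the original line beyond $|\mathrm{Im}\,s|\asymp N$ negligible. Using $|f(\sigma(N)+it)|\ll|t|$ on the line portion and $|f(s)|\ll F(1/r)$ on the detour, together with the above estimates for $\hat\phi_T$, I obtain
$$
a_N\ll N^{\sigma(N)}T+\frac{rF(1/r)N^{1+r}}{T},
$$
where the first term comes from $\int|t|\cdot T^{-1}\exp(-t^2/(2T^2))\,dt\asymp T$ and the second from $|\hat\phi_T|\asymp T^{-1}$ on the detour (of length $\asymp r$). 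Balancing over $T$ by setting $T^2=rF(1/r)N^{1+r-\sigma(N)}$ yields
$$
a_N\ll N^{(\sigma(N)+1)/2}\sqrt{rF(1/r)N^r}.
$$

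Finally, I would choose $r$ close to the minimizer $\varepsilon^*$ of $N^\varepsilon F(1/\varepsilon)$, producing $rF(1/r)N^r\ll\varepsilon^*F_\infty(2N)\leq\log N\cdot F_\infty(2N)$ (since $\varepsilon^*\leq 1\leq\log N$ for any admissible $F$). Since the right-hand side is nondecreasing in $N$, passing to the supremum over $n\leq x$ yields the claimed inequality. The main obstacle will be the rigorous implementation of the contour deformation and the geometric compatibility of the detour with the analyticity region: the constraint $r\geq 1-\sigma(N)$ needed for the detour to meet the line $\mathrm{Re}\,s=\sigma(N)$ may force $r$ above $\varepsilon^*$, requiring a slightly different case for small $F$. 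The Gaussian weight is essential: a fixed compactly supported bump would instead yield $a_N\ll N^{\sigma(N)}+rF(1/r)N^{1+r}$, lacking the square root present in the final bound, which is precisely what is gained by optimising the parameter $T$.
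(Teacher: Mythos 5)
Your proposal is essentially correct, but it takes a genuinely different route from the paper. The paper bounds the Ces\`aro-smoothed sum $\int_0^x A(\xi)\,d\xi$ (with $A(x)=\sum_{n\le x}a_n$) via the truncated Perron formula, shifts the contour to the curve $\mathrm{Re}\,s=\sigma(|t|)$ keeping the Laurent integral $\mathrm{Res}_{s=1}$ as a main term, and then recovers $A(x)$ by an ``asymptotic differentiation'' step: since $A$ is nondecreasing, $\frac1h\int_{x-h}^xA\le A(x)\le\frac1h\int_x^{x+h}A$, and optimising $h=\sqrt{Q(2x)/F_\infty(2x)}$ produces exactly the square-root saving you obtain by optimising $T$; finally $a_n=A(n)-A(n-1)$, and the difference of the two residue terms is shown to be $O(n^\delta)$. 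You instead majorise the single coefficient $a_N$ directly by a Gaussian-weighted sum, using positivity of the $a_n$ in the same essential way the paper uses monotonicity of $A$, with the Gaussian width $T$ playing the role of the paper's step $h$. Your version avoids the residue bookkeeping entirely and, carried through, even yields the marginally cleaner bound without the $\sqrt{\log x}$ factor.

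Two remarks on the points you flag. The constraint $r\ge 1-\sigma(N)$ is not a real obstruction: when $r<1-\sigma(N)$ the circle $|s-1|=r$ lies entirely inside the rectangle between the two vertical lines, and Cauchy's theorem gives $\int_{(c)}=\int_{(\sigma(N))}+\oint_{|s-1|=r}+(\text{horizontal segments and tails})$; the full circle is estimated exactly as your arc (length $\ll r$, $|f|\ll F(1/r)$, $N^{\mathrm{Re}\,s}\le N^{1+r}$, $|\hat\phi_T|\asymp T^{-1}$), so no case distinction is needed and $r$ may always be taken near the minimiser of $N^{\varepsilon}F(1/\varepsilon)$. The genuine loose end --- which the paper also treats only briefly --- is the bound for $f$ on the part of the shifted line where $|t|$ is neither large (so the hypothesis $|f(\sigma+it)|\ll|t|$ does not apply) nor $s$ close to $1$ (so the $F$-bound does not apply): this is handled by noting that $f$ is bounded on the compact set $\{\sigma(|t|)\le\sigma\le c,\ |t|\le t_1,\ |s-1|\ge\rho_0\}$, while for $\rho_0\ge|s-1|\ge r$ one still has $|f(s)|\ll F(1/r)$ by monotonicity of $F$, which contributes no more than the detour term.
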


Proof of this proposition requires four more lemmas. We begin with the truncated Perron's formula:

\begin{lemma}

Let $b_1,b_2,\ldots$ be a sequence of complex numbers, $B(x)=\sum\limits_{n\leq x} b_n$ and $F_B(s)=\sum\limits_{n=1}^{+\infty} b_n n^{-s}$. Assume that the series in the definition of $F_B(x)$ is absolutely convergent for any $\mathrm{Re}\,s>1$. Then for any $b>1, x\geq 2, T\geq 2$ we have the equality
$$\int_0^x B(\xi)d\xi=\frac{1}{2\pi i}\int_{b-iT}^{b+iT}\frac{F_B(s)}{s(s+1)}x^{s+1}ds+R(b,x,T),$$
where
$$R(b,x,T)\ll \frac{x^{b+1}}{T}\int_1^{+\infty} \frac{|B(\xi)|}{\xi^{b+1}}d\xi+2^b\left(\frac{x\log x}{T}+\log T\right)\max_{x/2\leq \xi \leq 3x/2}|B(\xi)|.$$
\end{lemma}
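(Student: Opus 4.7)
The strategy is to apply Perron's formula with the smoothed kernel $\frac{y^{s+1}}{s(s+1)}$ in place of the usual $\frac{y^s}{s}$. This kernel produces the integrated partial sum $\int_0^x B(\xi)\,d\xi$ automatically and decays an extra power of $|\mathrm{Im}\,s|$ on horizontal lines, which is what makes the truncated version viable.

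First I would establish the Mellin identity
$$\frac{1}{2\pi i}\int_{(c)} \frac{y^{s+1}}{s(s+1)}\,ds = (y-1)_+ \qquad (c>0,\ y\neq 1),$$
by closing the contour to the left when $y>1$ (picking up residues $y$ at $s=0$ and $-1$ at $s=-1$) and to the right when $y<1$ (no poles). Setting $y=x/n$, multiplying by $n$, and summing against $b_n$---with the summation-integration swap justified by absolute convergence of $F_B$ for $\mathrm{Re}\,s=b>1$---gives the exact identity
$$\int_0^x B(\xi)\,d\xi = \sum_{n\leq x} b_n(x-n) = \frac{1}{2\pi i}\int_{(b)} \frac{F_B(s)\,x^{s+1}}{s(s+1)}\,ds,$$
where the middle equality is obtained by swapping order of integration in $\int_0^x\sum_{n\leq\xi}b_n\,d\xi$.

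Next I would truncate the vertical line to $[b-iT,\,b+iT]$ and bound the discarded tails term by term, splitting the range of $n$ according to the distance from $x$. For $n\leq x/2$ or $n\geq 3x/2$, a standard contour-shift argument---using $|1/(s(s+1))|\ll 1/T^2$ on the horizontal segments $\mathrm{Im}\,s=\pm T$---gives a per-term error of size $\ll \frac{x(x/n)^b}{T|\log(x/n)|}$. In this range $|\log(x/n)|\gg 1$, so summing in $n$ and applying Abel summation converts $\sum|b_n|/n^b$ into $\int_1^{+\infty}|B(\xi)|\,\xi^{-b-1}\,d\xi$, producing the first error term of the lemma.

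The main obstacle will be the near-diagonal range $x/2<n<3x/2$, where $|\log(x/n)|$ can be arbitrarily small and the contour-shift estimate degenerates. Here I would avoid shifting and bound the truncated integral directly on the line $\mathrm{Re}\,s=b$, using the partial-fraction decomposition $\frac{1}{s(s+1)}=\frac{1}{s}-\frac{1}{s+1}$ to reduce to two ordinary truncated Perron integrals with kernel $1/s$. For each of these I can pull the quantity $M:=\max_{x/2\leq\xi\leq 3x/2}|B(\xi)|$ outside, leaving a harmonic-type sum $\sum_{n\approx x}1/|x-n|$ responsible for the $\log x$ factor and a principal-value-like contribution near $n\approx x$ responsible for the $\log T$ factor. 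Adding the two ranges yields the announced form of $R(b,x,T)$. The only nontrivial point beyond routine bookkeeping is this near-diagonal step, because the kernel is not small on the vertical line near $s=0$ and one must exploit the uniform bound on $|B(\xi)|$ over the short interval around $x$ rather than per-term estimates.
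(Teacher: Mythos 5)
Your main-term identity is correct and is the right starting point: with the kernel $\frac{y^{s+1}}{s(s+1)}$ the untruncated integral over $\mathrm{Re}\,s=b$ does produce $\sum_{n\le x}b_n(x-n)=\int_0^xB(\xi)\,d\xi$. (Note the paper does not prove this lemma at all; it cites Karatsuba--Voronin, Appendix \S 5, so the comparison is with the standard proof there.) The genuine gap is in your error analysis: you estimate the truncation error term by term in $n$, which yields quantities such as $\frac{x^{b+1}}{T}\sum_n|b_n|n^{-b}$, controlled by absolute values of the \emph{individual coefficients}, whereas the lemma's error is controlled by $\int_1^{\infty}|B(\xi)|\xi^{-b-1}\,d\xi$ and $\max_{x/2\le\xi\le3x/2}|B(\xi)|$, i.e.\ by the possibly heavily cancelling \emph{partial sums}. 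Your claim that Abel summation ``converts $\sum|b_n|/n^b$ into $\int|B(\xi)|\xi^{-b-1}d\xi$'' is false: Abel summation gives $\sum_n b_nn^{-b}=b\int_1^{\infty}B(\xi)\xi^{-b-1}d\xi$, but once absolute values sit on the individual terms the inequality runs the wrong way, namely $\sum_n|b_n|n^{-b}= b\int_1^{\infty}\bigl(\sum_{n\le\xi}|b_n|\bigr)\xi^{-b-1}d\xi\ \ge\ b\int_1^{\infty}|B(\xi)|\xi^{-b-1}d\xi$, and the discrepancy is real: for $b_M=1$, $b_{M+1}=-1$ and all other $b_n=0$, the left-hand side is $\asymp M^{-b}$ while the right-hand side is $\asymp M^{-b-1}$, so no constant (even depending on $b$) closes the gap. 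Your argument therefore proves a strictly weaker statement than the lemma.

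The missing idea is to keep $B$ intact \emph{before} taking absolute values. By partial summation, $F_B(s)=s\int_1^{\infty}B(\xi)\xi^{-s-1}d\xi$ for $\mathrm{Re}\,s>1$, hence
$$\frac{F_B(s)\,x^{s+1}}{s(s+1)}=\int_1^{\infty}B(\xi)\,\frac{(x/\xi)^{s+1}}{s+1}\,d\xi.$$
Integrating over the truncated segment, swapping the order of integration, and applying the classical truncated Perron estimate for $\frac{1}{2\pi i}\int\frac{y^{w}}{w}\,dw$ (with $w=s+1$, $y=x/\xi$) pointwise in $\xi$, one bounds the resulting error against $|B(\xi)|\,d\xi$: the range $|\xi-x|>x/2$, where $|\log(x/\xi)|\gg1$, gives the first term of $R$, and the range $x/2\le\xi\le3x/2$ gives the term with $\max|B(\xi)|$. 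Your near-diagonal device of ``pulling $M$ outside'' a sum over $n$ happens to be legitimate only because $|b_n|=|B(n)-B(n-1)|\le 2M$ for such $n$; no analogous device exists in the far range, which is why the term-by-term route cannot reach the stated bound.
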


\begin{proof}[Proof] See \cite{VK}, Appendix, \S 5.
\end{proof}

The next estimate for the summatory function of coefficients of Dirichlet series easily follows from the conditions of the Theorem 2:

\begin{lemma}
Suppose that the series $f(s)=\sum\limits_{n=1}^{+\infty} a_n n^{-s}$ satisfy the assumtions of the Theorem 2. Then the inequality
$$A(x)=\sum_{n \leq x} a_n\ll xF_{\infty}(x)$$
holds.
\end{lemma}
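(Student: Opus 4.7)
The plan rests on exploiting the non-negativity of the coefficients through the standard Rankin trick, which bounds the partial sum $A(x)$ by the Dirichlet series itself evaluated just to the right of $\mathrm{Re}\,s=1$. For any $\varepsilon>0$, since $a_n\ge 0$ and $(x/n)^{1+\varepsilon}\ge 1$ whenever $n\le x$,
$$
A(x)=\sum_{n\le x} a_n \;\le\; \sum_{n=1}^{\infty} a_n\bigl(x/n\bigr)^{1+\varepsilon} \;=\; x^{1+\varepsilon}f(1+\varepsilon).
$$
The singularity hypothesis $f(s)\ll F(1/|s-1|)$ near $s=1$ then gives $f(1+\varepsilon)\ll F(1/\varepsilon)$ for $\varepsilon$ in some fixed right neighbourhood $(0,\varepsilon_0]$ of $0$, so that $A(x)\ll x\cdot x^{\varepsilon}F(1/\varepsilon)$; passing to the infimum yields $A(x)\ll xF_{\infty}(x)$, as required.

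The only point needing care is to verify that the infimum defining $F_{\infty}(x)$ is effectively realised in the small-$\varepsilon$ regime in which the singularity bound is actually available. This is automatic: $F$ grows at least linearly, so $F(1/\varepsilon)\to\infty$ as $\varepsilon\to 0^+$, while $x^{\varepsilon}\to\infty$ as $\varepsilon\to\infty$, which forces the minimiser $\varepsilon^{*}(x)$ of $\varepsilon\mapsto x^{\varepsilon}F(1/\varepsilon)$ to tend to $0$ as $x\to\infty$ and hence eventually lie inside $(0,\varepsilon_0]$. For values $\varepsilon>\varepsilon_0$ the monotonicity of $f(\sigma)$ on $\sigma>1$ (again using $a_n\ge 0$) bounds $f(1+\varepsilon)$ by an absolute constant, but then the trivial bound $x^{1+\varepsilon}\cdot O(1)$ is already dominated by $x F_{\infty}(x)$ for large $x$, so such $\varepsilon$ never govern the infimum. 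The whole argument is essentially a one-liner made possible by the sign condition on $(a_n)$, and no genuine obstacle is present; the hypothesis that $F$ dominates $x$ is invoked only in this last bookkeeping step.
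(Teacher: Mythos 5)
Your proposal is correct and follows essentially the same route as the paper: Rankin's trick $A(x)\le x^{1+\varepsilon}f(1+\varepsilon)\ll x^{1+\varepsilon}F(1/\varepsilon)$ followed by optimizing over $\varepsilon$ to obtain $F_\infty(x)$. Your extra care about the infimum being attained in the small-$\varepsilon$ regime is a reasonable (if unstated in the paper) bookkeeping check, but the argument is the same.
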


\begin{proof}[Proof]
Since for any $\varepsilon>0$ the series $f(1+\varepsilon)=\sum\limits_{n=1}^{+\infty} a_n n^{-1-\varepsilon}$ converges, by the nonnegativity of $a_n$ we get
$$0\leq A(x)=\sum_{n \leq x} a_n \leq \sum_n a_n\left(\frac{x}{n}\right)^{1+\varepsilon} \leq f(1+\varepsilon)x^{1+\varepsilon}\ll x^{1+\varepsilon}F\left(\frac{1}{\varepsilon}\right).$$
Choosing $\varepsilon$ such that $x^{\varepsilon}F\left(\frac{1}{\varepsilon}\right)=F_\infty(x)$, we obtain the required inequality.
\end{proof}

By the means of so-called ``asymptotic differentiation'', the Lemma 3 allows us to deduce the remainder term in the asymptotic formula for $A(x)$ in terms of remainder term in formula for $\int\limits_0^x A(\xi)d\xi$:

\begin{lemma}
Let $f(s)=\sum\limits_{n=1}^{+\infty} a_n n^{-s}$ be some Dirichlet series which meets the hypotheses of the Theorem 2. Suppose that for all large enough $x$ the equality
$$\int_0^x A(\xi)d\xi=\mathrm{Res}_{s=1} \frac{f(s)x^{s+1}}{s(s+1)}+O(Q(x))$$
holds, where $0<Q(x)\leq \frac{x^2}{4}$ is an increasing function. Then we have
$$A(x)=\mathrm{Res}_{s=1}\frac{f(s)x^s}{s}+O(\sqrt{F_{\infty}(2x)}\sqrt{Q(2x)}).$$
\end{lemma}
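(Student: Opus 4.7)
My plan is to apply the classical asymptotic differentiation device. Since $a_n\geq 0$, the summatory function $A(x)$ is nondecreasing, so for any parameter $h\in(0,x)$ to be chosen later, $A(x)$ is squeezed between the averages $\frac{1}{h}\int_{x-h}^{x}A(\xi)\,d\xi$ and $\frac{1}{h}\int_{x}^{x+h}A(\xi)\,d\xi$. Plugging in the hypothesis on $\int_{0}^{\cdot}A(\xi)\,d\xi$ rewrites each of these averages as a divided difference of
$$M_1(x):=\mathrm{Res}_{s=1}\frac{f(s)x^{s+1}}{s(s+1)}$$
plus an error $O(Q(2x)/h)$, using the monotonicity of $Q$.

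Next I would observe that $M_1$ is an antiderivative of the target $M(x):=\mathrm{Res}_{s=1}\frac{f(s)x^s}{s}$. Interpreting the residue as the contour integral $\frac{1}{2\pi i}\oint_{|s-1|=r}$ on a small fixed circle inside the punctured disk of analyticity around $s=1$ guaranteed by Theorem 2, and differentiating under the integral sign, yields $M_1'(x)=M(x)$ and $M_1''(x)=\mathrm{Res}_{s=1}f(s)x^{s-1}$. Taylor's theorem with remainder then gives
$$\frac{M_1(x\pm h)-M_1(x)}{\pm h}=M(x)+O\!\Bigl(h\sup_{\xi\in[x/2,\,2x]}|M_1''(\xi)|\Bigr).$$

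The main technical step is the bound on $M_1''$. Parametrising the same contour by $|s-1|=\varepsilon$ and applying the growth hypothesis $|f(s)|\ll F(1/|s-1|)$ produces $|M_1''(\xi)|\ll \varepsilon F(1/\varepsilon)\xi^\varepsilon\leq F(1/\varepsilon)\xi^\varepsilon$, and taking the infimum over $\varepsilon\in(0,1)$ is exactly the definition of $F_\infty(\xi)$, which is itself monotone and bounded by $F_\infty(2x)$ on the relevant range. Combining everything, I obtain
$$A(x)=M(x)+O\!\left(hF_\infty(2x)+\frac{Q(2x)}{h}\right),$$
and the balanced choice $h=\sqrt{Q(2x)/F_\infty(2x)}$ gives the claimed error $\sqrt{F_\infty(2x)Q(2x)}$. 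I do not anticipate any serious obstacle here; the only routine verification is admissibility of this choice (namely $h\leq x/2$), which is straightforward from $Q(2x)\leq x^2$ together with the mild lower bound $F_\infty(2x)\gg \log x$ implied by the assumption that $F$ grows at least linearly.
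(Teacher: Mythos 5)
Your proposal is correct and follows essentially the same route as the paper: squeeze $A(x)$ between averages over $[x-h,x]$ and $[x,x+h]$ using monotonicity, estimate the divided difference of the residue via the contour $|s-1|=\varepsilon$ with $|f|\ll F(1/\varepsilon)$, optimize $\varepsilon$ to get the error $O(hF_\infty(2x))$, and balance with $h=\sqrt{Q(2x)/F_\infty(2x)}$. The only cosmetic difference is that you invoke Taylor's theorem with a bound on $M_1''$, whereas the paper computes the divided difference $\frac{(x\pm h)^{s+1}-x^{s+1}}{\pm hs(s+1)}=\frac{x^s}{s}+O(h(2x)^{\sigma-1})$ explicitly before taking residues; the two are interchangeable.
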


\begin{proof}[Proof]
As the function $A(\xi)$ is nondecreasing, for any $0<h\leq x$ we have
$$\frac{1}{h}\int_{x-h}^x A(\xi)d\xi \leq A(x) \leq \frac{1}{h}\int_{x}^{x+h} A(\xi)d\xi.$$
By virtue of our lemma, the inequalities
$$\mathrm{Res}_{s=1}\frac{f(s)(x^{s+1}-(x-h)^{s+1})}{hs(s+1)}+O\left(\frac{Q(2x)}{h}\right)\leq A(x)$$
and
$$A(x)\leq \mathrm{Res}_{s=1}\frac{f(s)((x+h)^{s+1}-x^{s+1})}{hs(s+1)}+O\left(\frac{Q(2x)}{h}\right)$$
are true. On the other hand,
$$\frac{(x+h)^{s+1}-x^{s+1}}{hs(s+1)}=\int_x^{x+h} \frac{\xi^s}{hs}d\xi=\frac{x^s}{s}+\int_x^{x+h} \frac{\xi^s-x^s}{hs}d\xi=$$
$$=\frac{x^s}{s}+\int_x^{x+h} \frac{1}{h}\left(\int_x^\xi \theta^{s-1}d\theta\right)d\xi=\frac{x^s}{s}+O(h(2x)^{\sigma-1})$$
and similarly
$$\frac{x^{s+1}-(x-h)^{s+1}}{hs(s+1)}=\frac{x^s}{s}+O(h(2x)^{\sigma-1}).$$
Hence, for any $1\gg\varepsilon>0$ the equalities
$$\mathrm{Res}_{s=1}\frac{f(s)((x+h)^{s+1}-x^{s+1})}{hs(s+1)}=\frac{1}{2\pi i} \int_{|s-1|=\varepsilon} f(s)\left(\frac{x^s}{s}+O(h(2x)^{\varepsilon})\right)=$$
$$=\mathrm{Res}_{s=1} \frac{f(s)x^s}{s}+O(h(2x)^{\varepsilon}\sup_{|s-1|=\varepsilon} |f(s)|)$$
and
$$\mathrm{Res}_{s=1}\frac{f(s)(x^{s+1}-(x-h)^{s+1})}{hs(s+1)}=\mathrm{Res}_{s=1}\frac{f(s)x^s}{s}+O(h(2x)^{\varepsilon}\sup_{|s-1|=\varepsilon}|f(s)|)$$
hold. Furthermore, the equality $\sup\limits_{|s-1|=\varepsilon} |f(s)|=O(F(1/\varepsilon))$ holds. Therefore, choosing $\varepsilon$ optimally we find
$$A(x)=\mathrm{Res}_{s=1}\frac{f(s)x^s}{s}+O\left(hF_{\infty}(2x)+\frac{Q(2x)}{h}\right).$$
Substituting  $h=\sqrt{\frac{Q(2x)}{F_{\infty}(2x)}}$ in this equality (this choice is admissible as\\ $0<\sqrt{Q(2x)}\leq x$) we obtain the desired result.
\end{proof}

\begin{lemma}

Under the conditions of the Theorem 2 the equalities
$$\int_0^x A(\xi)d\xi=\mathrm{Res}_{s=1} \frac{f(s)x^{s+1}}{s(s+1)}+O(x^{\sigma(x)+1}\log x)$$
and
$$A(x)=\mathrm{Res}_{s=1} \frac{f(s)x^s}{s}+O(x^{(\sigma(x)+1)/2}\sqrt{F_{\infty}(2x)}\sqrt{\log x})$$
are true.
\end{lemma}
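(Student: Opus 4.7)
The plan is to prove the first formula via the truncated Perron formula (Lemma~1) combined with a contour shift past the pole at $s=1$, and then to deduce the second formula by a direct application of Lemma~3.

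First I would apply Lemma~1 with $b = 1 + 1/\log x$ (or, if $F$ grows very rapidly, with $b = 1 + \eta$ where $\eta$ minimises $x^{\eta}F(1/\eta) = F_{\infty}(x)$) and a truncation height $T$ to be chosen. The estimate $f(b) \ll F(1/(b-1))$ (valid by the nonnegativity of the coefficients and the hypothesis of Theorem~2 near $s = 1$) together with Lemma~2 ($A(\xi) \ll \xi F_{\infty}(\xi)$) yields
$$R(b,x,T) \ll \frac{x^{b+1}F(1/(b-1))}{T} + \left(\frac{x\log x}{T} + \log T\right) x F_{\infty}(x).$$

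Next I would shift the segment $\{\mathrm{Re}\,s = b,\ |\mathrm{Im}\,s| \leq T\}$ leftward to the curve $\Gamma = \{\sigma(|t|) + it : |t| \leq T\}$, which lies entirely in the domain of analyticity of $f$, closing up with horizontal links at $t = \pm T$. The pole at $s = 1$ is enclosed by the resulting rectangle and produces the residue. The horizontal links contribute $\ll x^{b+1}/(T\log x)$ via $|f(\sigma \pm iT)| \ll T$ and $|s(s+1)| \asymp T^2$. On $\Gamma$ I would split at a fixed height $t_0$: for $|t| \leq t_0$ the distance $|s-1|$ is bounded below by $1 - \sigma(t_0)$, so $|f(s)|$ is at most the constant $F(1/(1-\sigma(t_0)))$, contributing $\ll x^{\sigma(t_0)+1} \leq x^{\sigma(x)+1}$; for $|t| > t_0$ the hypothesis $|f(s)| \ll |t|$ combined with $|s(s+1)| \asymp t^{2}$ gives an integrand $\ll x^{\sigma(|t|)+1}/|t|$, which integrates to $\ll x^{\sigma(T)+1}\log T$ by the monotonicity of $\sigma$.

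Taking $T$ of order $x$ (say $T = x\log x$, so that $\log T \sim \log x$ and $\sigma(T) \to \sigma(x)$), the shifted contour yields an error $O(x^{\sigma(x)+1}\log x)$. The surviving piece $x F_{\infty}(x)\log T$ of $R$ is absorbed because $F_{\infty}(x) = o(x^{\varepsilon})$ for every $\varepsilon > 0$ combined with $\sigma(x) \geq \delta$ forces $F_{\infty}(x) \ll x^{\sigma(x)}$ for large $x$, and the first term of $R$ is handled by the choice of $b$. This yields the first equality. The second then follows at once from Lemma~3 with $Q(x) = x^{\sigma(x)+1}\log x$, using that the regularity of $\sigma$ gives $(2x)^{(\sigma(2x)+1)/2} \asymp x^{(\sigma(x)+1)/2}$. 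The principal difficulty is the simultaneous balancing: the truncation error $R$ involves $F$ at the point $1/(b-1)$ we are free to choose, while the contour integral involves $F$ through its behaviour near $s = 1$, so $b$ and $T$ must be tuned together to push every piece of the error below the target $x^{\sigma(x)+1}\log x$.
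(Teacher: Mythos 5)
Your overall architecture --- truncated Perron formula, contour shift to the curve $\sigma=\sigma(|t|)$, bounding the curve integral by $x^{\sigma(x)+1}\log x$ via $|f(s)|\ll|t|$ and $|s(s+1)|\gg (|t|+1)^2$, then feeding $Q(x)=x^{\sigma(x)+1}\log x$ into Lemma~3 --- is exactly the paper's. The differences are in the parameter choices, and your primary choices create problems the paper avoids. First, with $b=1+1/\log x$ the leading term of $R$ is $\frac{x^{b+1}}{T}\int_1^\infty A(\xi)\xi^{-b-1}d\xi=\frac{x^{b+1}f(b)}{bT}\asymp x^2F(\log x)/T$, and in the intended application $F$ is doubly exponential ($F(u)=e^{e^u}$), so $F(\log x)=e^{x}$: no $T$ polynomial in $x$ absorbs this, while any $T$ large enough to absorb it makes the curve contribution $x^{\sigma(T)+1}\log T$ overshoot the target $x^{\sigma(x)+1}\log x$. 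Your parenthetical fallback $b=1+\eta$ with $x^{\eta}F(1/\eta)=F_{\infty}(x)$ does repair this (the term becomes $x^{2}F_{\infty}(x)/T\ll x^{2+\delta/2}/T$), but you would need to commit to it and carry it through; the paper instead simply takes $b=1+\delta$ fixed, where $\delta$ is the constant with $\sigma(\cdot)>\delta$ from Theorem~2, so that $\int_1^\infty A(\xi)\xi^{-\delta-2}d\xi=O(1)$ and the term is $x^{1+\delta}\ll x^{1+\sigma(x)}$ already with $T=x$ --- the ``simultaneous balancing'' you single out as the principal difficulty then disappears entirely. Second, $T=x\log x$ forces a comparison of $\sigma(x\log x)$ with $\sigma(x)$; the hypotheses give only that $\sigma$ is nondecreasing and $\leq 1-\frac{2\log\log x}{\log x}$, which does not imply $x^{\sigma(x\log x)}\ll x^{\sigma(x)}$, so the step ``$\sigma(T)\to\sigma(x)$'' is unjustified --- taking $T=x$, as the paper does, removes the issue. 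With those two repairs your argument coincides with the paper's; the remaining omission (verifying $Q(x)\leq x^2/4$ before invoking Lemma~3, which follows from $x^{\sigma(x)}\leq x/(\log x)^2$) is routine.
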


\begin{proof}[Proof]
Choose $T=x, b=1+\delta$, where $\delta$ is the constant from the formulation of the Theorem 2. Applying Lemma 1 we get
$$\int_0^x A(\xi)d\xi=\frac{1}{2\pi i}\int_{1+\delta-ix}^{1+\delta+ix} \frac{f(s)x^{s+1}}{s(s+1)}ds+R(b,x,x)$$
with
$$R(b,x,x)\ll x^{1+\delta}\int_1^{+\infty} \frac{|A(\xi)|}{\xi^{\delta+2}}d\xi+(\log x)\max_{x/2\leq\xi \leq 3x/2}|A(\xi)|.$$
Let us estimate the first summand. According to the Lemma 2 we have $A(\xi)=O(\xi^{1+\delta/2})$. Hence,
$$\int_1^{+\infty} \frac{|A(\xi)|}{\xi^{\delta+2}}d\xi=O(1).$$
Therefore,
$$x^{1+\delta}\int_1^{+\infty}\frac{|A(\xi)|}{\xi^{\delta+2}}d\xi=O(x^{1+\delta}).$$
As for the second summand, it equals $O(xF_{\infty}(3x)\log x)$ by the Lemma 2. Consequently, we have
$$R(b,x,x)=O(x^{1+\delta}+xF_{\infty}(3x)\log x).$$
It remains to calculate the integral. To do this, let us move the contour to the curve $\mathrm{Re}\,s=\sigma(|t|)$. By Cauchy's integral formula, we get
$$\frac{1}{2\pi i}\int_{1+\delta-ix}^{1+\delta+ix} \frac{f(s)x^{s+1}}{s(s+1)}ds=\mathrm{Res}_{s=1} \frac{f(s)x^{s+1}}{s(s+1)}+\frac{1}{2\pi i}(I_0-I_{-}+I_{+}),$$
\noindent where
$$I_0=\int_{\substack{|t| \leq x\\ \sigma=\sigma(|t|)}} \frac{f(s)x^{s+1}}{s(s+1)}ds,$$
$$I_+=\int_{\sigma(x)+ix}^{1+\delta+ix} \frac{f(s)x^{s+1}}{s(s+1)}ds$$
and
$$I_-=\int_{\sigma(x)-ix}^{1+\delta-ix} \frac{f(s)x^{s+1}}{s(s+1)}ds.$$
Due to the fact that $\sigma(t)$ is nondecreasing, the estimate
$$|I_0|\leq \int_{\substack{|t| \leq x \\ \sigma \geq \sigma(|t|)}} \frac{|f(s)|x^{\sigma(x)+1}}{|s(s+1)|}d|s|$$
holds. Note that on the contour we have $|s(s+1)| \gg (|t|+1)^2$. Furthermore, if $|t|$ is large enough, then due to the conditions of the Theorem 2 for $s$ lying on our contour the inequality
$$|f(s)| \ll |t|+1$$
is true. If $|t|\leq t_1$, then $\sigma(|t|)$ is bounded away from 1, hence the quantity $|f(s)|$ is bounded uniformly in $x$ therefore for the rest of the numbers $s$ on the contour we have the same inequality. Consequently,
$$|I_0| \leq \int_{|t| \leq x} \frac{x^{\sigma(x)+1}}{|t|+1}dt \ll x^{\sigma(x)+1}\log x.$$
The values $I_+$ and $I_-$ are conjugate complex numbers. So, it suffices to estimate one of them.
$$I_+ \ll \int_{\sigma(x)}^{1+\delta} \frac{|f(s)|x^{\sigma+1}}{x^2}d\sigma \ll x^{\delta+1}.$$
As $\sigma(x)>\delta$, we obtain the equality
$$\frac{1}{2\pi i} \int_{1+\delta-ix}^{1+\delta+ix} \frac{f(s)x^{s+1}}{s(s+1)}ds=\mathrm{Res}_{s=1} \frac{f(s)x^{s+1}}{s(s+1)}+O(x^{\sigma(x)+1}\log x).$$

 Furthermore, we have $x^{\delta+1}+xF_{\infty}(3x)\log x \ll x^{\delta+1} \log x \ll x^{\sigma(x)+1}\log x$. Hence,
$$\int_0^x A(\xi)d\xi=\mathrm{Res}_{s=1} \frac{f(s)x^{s+1}}{s(s+1)}+O(x^{\sigma(x)+1}\log x),$$
\noindent which was to be proved.

The second inequality follows form the Lemma 3 with $Q(x)=x^{1+\sigma(x)}\log x$, as

\noindent $\sigma(x) \leq 1-\frac{2\log\log x}{\log x}$ and so $Q(x)\ll \frac{x^2}{\log x} \leq \frac{x^2}{4}$ for all $x$ large enough.
\end{proof}

The Theorem 2 is easily deduced from the last equality of the Lemma 4:

\begin{proof}[Proof of Theorem 2]

Let us notice that $a_n=A(n)-A(n-1)$. The Lemma 4 implies that for any $1<n\leq x$ we have
$$a_n=\mathrm{Res}_{s=1} \frac{f(s)(n^s-(n-1)^s)}{s}+O(x^{(\sigma(x)+1)/2}\sqrt{\log x}\sqrt{F_{\infty}(2x)}).$$
But the first summand is negligible, because $$n^s-(n-1)^s\ll |s|n^{\sigma-1}$$ and so
$$\mathrm{Res}_{s=1}\frac{f(s)(n^s-(n-1)^s)}{s}=\frac{1}{2\pi i}\int_{|s-1|=\delta} \frac{f(s)(n^s-(n-1)^s)}{s}ds \ll n^\delta.$$
This concludes the proof of the theorem.
\end{proof}

In the next theorem we construct a natural family of Dirichlet series with large coefficients.
We begin with the following lemma:
\begin{lemma}

Let $F:\mathbb R_{\geq 0}\to \mathbb R_{\geq 0}$ be an increasing function with
$$\lim_{x \to \infty} \frac{F(x)}{x}=+\infty.$$
Denote $F^*(y)=\sup_{x \geq 0}(xy-F(x))$ and $\log_{+} x=\max(0,\log x)$.
Then the series
$$M_F(z)=\sum_{n=1}^{+\infty} \frac{z^n}{n^2}e^{-F^*(n)}$$
converges absolutely for any $z \in \mathbb C$, defines an entire function and satisfies the inequality
$$|M_F(z)|\leq \frac{\pi^2}{6} e^{F(\log_{+}|z|)}.$$
\end{lemma}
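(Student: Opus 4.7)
The key identity to exploit is the Young-type inequality built into the definition of the convex conjugate: for any $x,y \ge 0$,
\[
xy \le F(x) + F^{*}(y).
\]
Indeed, this is just a restatement of $F^{*}(y) = \sup_{x\ge 0}(xy - F(x))$. The plan is to apply this with $y = n$ and $x = \log_{+}|z|$, which gives
\[
F^{*}(n) \ge n\log_{+}|z| - F(\log_{+}|z|),
\qquad\text{hence}\qquad
e^{-F^{*}(n)} \le e^{F(\log_{+}|z|)}\,e^{-n\log_{+}|z|}.
\]

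With this in hand, the bound is immediate. For any $z\in\mathbb C$ and any $n\ge 1$,
\[
\left|\frac{z^{n}}{n^{2}}e^{-F^{*}(n)}\right|
\le \frac{|z|^{n}}{n^{2}}\,e^{F(\log_{+}|z|)}\,e^{-n\log_{+}|z|}
\le \frac{e^{F(\log_{+}|z|)}}{n^{2}},
\]
where in the last step I use $|z|^{n}e^{-n\log_{+}|z|}\le 1$ (the two cases $|z|\ge 1$ and $|z|<1$ are both trivial). Summing in $n$ and using $\sum_{n\ge 1}n^{-2}=\pi^{2}/6$ gives exactly the stated inequality
\[
|M_{F}(z)| \le \frac{\pi^{2}}{6}\,e^{F(\log_{+}|z|)}.
\]

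For the analytic properties: the previous estimate shows that on any disk $|z|\le R$ the series is dominated termwise by $\frac{1}{n^{2}}e^{F(\log_{+}R)}$, a convergent numerical series independent of $z$. Hence the power series converges absolutely and uniformly on compact sets, so it defines an entire function, as claimed.

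There is essentially no obstacle here — once one identifies the Young inequality as the right tool, everything is a one-line manipulation. The only subtlety worth flagging is the treatment of $|z|<1$, where $\log_{+}|z|=0$ forces us to use the convention $|z|^{n}\cdot 1\le 1$ rather than the geometric-series cancellation that occurs for $|z|\ge 1$; the use of $\log_{+}$ (rather than $\log$) in the formulation of the lemma is precisely what makes this uniform in $|z|$.
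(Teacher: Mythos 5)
Your proof is correct and follows essentially the same route as the paper's: apply the Fenchel--Young inequality $n\log_{+}|z| - F(\log_{+}|z|) \le F^{*}(n)$ termwise, bound $|z|^{n}e^{-n\log_{+}|z|}\le 1$, and sum against $n^{-2}$ to obtain the $\pi^{2}/6$ constant, with entirety following from uniform convergence on disks. (Incidentally, the paper's displayed intermediate bound has a typographical slip, writing $e^{nF(\log_{+}|z|)-F^{*}(n)}$ where $e^{n\log_{+}|z|-F^{*}(n)}$ is meant; your write-up states it correctly.)
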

\begin{remark}
If the function $\frac{F(x)}{x}$ is bounded, then any entire function satisfying the last inequality is a polynomial, while for our subsequent constructions we need Taylor coefficients to be positive.
\end{remark}

\begin{proof}[Proof]

Indeed, by the definition of $F^*$, we have 

$n\log_{+}|z|-F(\log_{+}|z|)\leq F^*(n)$. Hence, for any $z$ one has
$$|z^ne^{-F^*(n)}| \leq e^{nF(\log_{+}|z|)-F^*(n)}\leq e^{F(\log_{+}|z|)}.$$
Consequently,
$$|M_F(z)|\leq \sum_{n=1}^{+\infty} \frac{|z^ne^{-F^*(n)}|}{n^2} \leq e^{F(\log_{+}|z|)}\sum_{n=1}^{+\infty} \frac{1}{n^2}=\frac{\pi^2}{6}e^{F(\log_{+}|z|)}.$$
\end{proof}
With the help of this construction we prove the following fact:
\begin{theorem}
Let $g(s)=\sum\limits_{n=1}^{+\infty} g_n n^{-s}$ be some Dirichlet series which converges absolutely for any $s$ with $\mathrm{Re}\,s>1$. Suppose that for any $n$ we have $1\leq g_n$. Then the series $M_F(g(s))$ also converges for $\mathrm{Re}\,s>1$ and for large enough $x$ the inequality
$$\sup_{n \leq x} a_n \geq e^{\log k\frac{\log x}{\log\log x}-F^*(k)}$$
holds for any natural number $k$.
\end{theorem}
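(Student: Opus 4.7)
The plan is to expand $M_F(g(s))$ into a Dirichlet series with nonnegative coefficients, read those coefficients off, and then exhibit an $n\le x$ whose coefficient is large.

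Writing $M_F(g(s))=\sum_{k\ge 1}\frac{e^{-F^*(k)}}{k^2}g(s)^k$ and expanding each power $g(s)^k=\sum_n c_n^{(k)}n^{-s}$, the hypothesis $g_n\ge 1$ forces $c_n^{(k)}\ge 0$ for every $n$ and $k$. For $\sigma=\mathrm{Re}\,s>1$ the absolute convergence of $g$ gives $|g(s)|\le g(\sigma)<\infty$, and Lemma~5 applied to $z=g(\sigma)$ yields $M_F(g(\sigma))<\infty$. The resulting double series of nonnegative terms therefore converges, so by Fubini the order of summation may be interchanged and I obtain the absolutely convergent expansion
\[
M_F(g(s))=\sum_{n=1}^{\infty}a_n n^{-s},\qquad a_n=\sum_{k=1}^{\infty}\frac{e^{-F^*(k)}}{k^2}c_n^{(k)}.
\]

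Next I keep only the $k$-th term. The convolution identity $c_n^{(k)}=\sum_{n_1\cdots n_k=n}g_{n_1}\cdots g_{n_k}\ge d_k(n)$ (each $g_{n_i}\ge 1$), where $d_k$ is the $k$-fold divisor function, gives the one-term lower bound $a_n\ge \frac{e^{-F^*(k)}}{k^2}d_k(n)$. To make $d_k(n)$ large for small $n$, I take the primorial $n_0=p_1 p_2\cdots p_m$, for which $d_k(n_0)=k^m$ (every ordered $k$-factorization of $n_0$ assigns each of the $m$ distinct primes to one of the $k$ slots). Choosing $m=\lceil \log x/\log\log x\rceil+2$, the prime-number-theorem estimates $p_m\sim m\log m$ and $\theta(p_m)\sim p_m$ give
\[
\log n_0=\theta(p_m)=\log x-\frac{\log x\cdot\log\log\log x}{\log\log x}+O(\log\log x),
\]
which is strictly less than $\log x$ for $x$ large. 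Hence $n_0\le x$ and
\[
\sup_{n\le x}a_n\ \ge\ \frac{e^{-F^*(k)}}{k^2}k^m\ \ge\ \exp\!\left(\log k\cdot\frac{\log x}{\log\log x}-F^*(k)\right),
\]
the $k^{-2}$ cancelling exactly against the two extra powers of $k$ gained from the ``$+2$'' in $m$.

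The main obstacle is the PNT-type calibration that lets me raise $m$ from $\log x/\log\log x$ up to $\lceil \log x/\log\log x\rceil+2$ without losing the inequality $n_0\le x$; the saving $\log x\cdot\log\log\log x/\log\log x$ over $\log x$ provides ample room, but one does need to invoke the prime number theorem (a Chebyshev bound on $\theta$ alone would cost a constant factor in the exponent). Everything else is routine manipulation of nonnegative Dirichlet series via Fubini.
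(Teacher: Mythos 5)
Your proof is correct and follows essentially the same route as the paper: expand $M_F(g(s))$ as a Dirichlet series, isolate the single summand indexed by $k$, lower-bound the coefficient at the primorial $n_0=\prod_{p\le m}p$ via the $k$-fold divisor function $d_k(n_0)=k^{\pi(m)}$, and use prime-number-theorem asymptotics for $\pi$ and $\theta$ to calibrate $m$ so that $n_0\le x$ while $\pi(m)\ge \log x/\log\log x+2$, with the $k^{-2}$ cancelled by the $+2$. The only difference from the paper is notational (you parameterize by the number of primes rather than a real threshold $m$ with $\pi(m)$ and $\theta(m)$); your remark that Chebyshev-type bounds alone would lose a constant factor matches the paper's explicit invocation of PNT with error term.
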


\begin{proof}[Proof]

Indeed,
$$M_F(g(s))=\sum_{n=1}^{+\infty} \frac{g^n(s)}{n^2}e^{-F^*(n)}.$$
Convergence of this series follows from the Lemma 5. As for any $n$ the Dirichlet coefficients of the function $g^n(s)$ are positive, Dirichlet coefficients of $M_F(g(s))$ are positive, too.

To prove the lower bound for the coefficients, let us note that for any natural $k$ we have
$$a_n \geq \frac{g_n(k)e^{-F^*(k)}}{k^2},$$
where $\sum\limits_{n=1}^{+\infty} \frac{g_n(k)}{n^s}=g^k(s)$. Now, for sufficiently large real $x$ choose a real number $m$ satisfying the inequalities
$$\pi(m)=\sum_{p \leq m} 1\geq \frac{\log x}{\log\log x}+2$$
and
$$\theta(m)=\sum_{p \leq m} \log p \leq \log x.$$
Such a choice is possible because of the formulas $$\pi(m)=\frac{m}{\log m}+\frac{m}{\log^2 m}+O\left(\frac{m}{\log^3 m}\right)$$ and 
$$\theta(m)=m+O(me^{-c\sqrt{\log m}}).$$ Choosing $n=\prod\limits_{p \leq m} p=e^{\theta(m)} \leq x$, we get
$$g_n(k)=\sum_{n_1\ldots n_k=n} g_{n_1}\ldots g_{n_k} \geq \sum_{n_1\ldots n_k=n}1=k^{\pi(m)}\geq k^{2+\frac{\log x}{\log\log x}}=k^2e^{\log k \frac{\log x}{\log\log x}}.$$
From here we deduce the relation
$$\sup_{l \leq x} a_l \geq a_n \geq \frac{g_n(k)e^{-F^*(k)}}{k^2} \geq e^{\log k\frac{\log x}{\log\log x}-F^*(k)},$$
which was to be proved.
\end{proof}

\section{Proof of Theorem 1}
Theorem 3 together with the Lemma 5 allows us to construct Dirichlet series which grows moderately in some subset inside their domain of analyticity with coefficients that can attain rather large values. On the other hand, general Theorem 2 gives an upper bound for the coefficients of Dirichlet series which do not grow too fast in some region of the complex plane.

Assuming that the derivative $\zeta^{(n)}(s)$ does not satisfy the Theorem 1 in some region to the left of the line $\sigma=1$, we will construct Dirichlet series which takes no large values in this domain. However, some of its coefficients will be big enough to contradict the Theorem 2.

More precisely, suppose that the inequality
\begin{equation}
\label{eq1}
\limsup_{\substack{s \in \Sigma(\sigma,T) \\ T \to \infty}} \frac{|\zeta^{(n)}(s)|}{F(t)} \geq 1
\end{equation}

is false. Then for some $c$ with condition $0<c<1$ and for every $\sigma$ and $t$ from the domain $\Sigma(\sigma,T)$ with sufficiently large $|t|$ and $T$ we have
$$\frac{|\zeta^{(n)}(s)|}{F(|t|)} \leq 1-c.$$
From this for $|t| \geq t_0$ we find
$$\frac{|1+\zeta^{(n)}(s)|}{F(|t|)} \leq 1-c+\frac{1}{F(|t|)} \leq 1-\frac{c}{2}<1.$$
Hence, for such $\sigma$ and $t$ the inequalities
$$|1+\zeta^{(n)}(\sigma+it)|<F(|t|), \, F^{-1}(|1+\zeta^{(n)}(\sigma+it)|)<t$$
hold. Let now $G(u)$ be some real monotonically increasing function that satisfies the relation 
 $$e^{G(\log_{+}|z|)}\leq F^{-1}(|z|).$$ 
 Substituting $z=1+\zeta^{(n)}(s)$ to the last inequality, we obtain 
 $$e^{G(\log_{+}|z|)}\leq F^{-1}(|1+\zeta^{(n)}(s)|)<|t|.$$ 
 Consequently, setting 
 $$f(s)=M_G(1+\zeta^{(n)}(s)),$$
we deduce for all $s \in \Sigma(\sigma,T)$ the inequality $|f(\sigma+it)|\ll |t|$. Moreover, due to the Theorem 3 for the coefficients $a_n$ of Dirichlet series $f(s)$ the lower bound
\begin{equation}
\label{eq2}
\sup_{n\leq x} a_n \geq \exp((\log k)\frac{\log x}{\log\log x}-F^*(k))
\end{equation}

holds for arbitrary positive integer $k$. However, by the Theorem 2 the estimate
\begin{equation}
\label{eq3}
\sup_{n \leq x} a_n \ll x^{(1+\sigma(x))/2}\sqrt{\log x}\sqrt{F_\infty(2x)}.
\end{equation}
is true. Now, for any pair $F,\sigma$ an appropriate choice of function $G$ and natural number $k$ will lead the bounds~\ref{eq2} and~\ref{eq3} to contradiction.

\begin{proof}[Proof of Theorem 1]
Let $$F(t)=\exp((\log\log t)^{1+\varepsilon/2-\delta})$$ and $$\sigma(t)=1-(4+\varepsilon)\frac{\log\log\log t}{\log\log t}.$$ 

Choose $G(z)=\exp\exp(\frac{\log z}{1+\varepsilon/2-\delta})$ (we define this function by this formula on the interval $[1,+\infty]$ and extend it to the interval $[0,1]$ monotonically). Suppose that for the pair $(F,\sigma)$ Theorem 1 is false. Consider the function\\ $f(s)=M_G(\zeta^{(n)}(s)+1).$ For all sufficiently large $t$ it satisfies for $\sigma\geq\sigma(t)$ the inequality $|f(s)|\ll |t|$ as $|\zeta^{(n)}(s)|\leq c\exp((\log\log t)^{1+\varepsilon/2-\delta})$ for some $c<1$ and hence
$$f(s)\leq e^{G(\log_+(|\zeta^{(n)}(s)|+1))}\ll e^{G((\log\log t)^{1+\varepsilon/2-\delta})}=\exp\exp\exp(\log\log\log t)=t.$$
Furthermore, for $s\to 1$ we have
$$f(s)\ll \exp\exp\left(\frac{1}{|s-1|}\right).$$
Indeed, in the neighbourhood of $s=1$ the estimate
$$\zeta^{(n)}(s) \ll \frac{1}{|s-1|^{n+1}},$$
holds. Thus, for any natural $m$ we have
$$|M_G(z)|\ll \exp(\exp(|z|^{1/m})).$$
Consequently, for $s \to 1$ we have
$$|M_G(1+\zeta^{(n)}(s))| \ll \exp\exp\left(\frac{1}{|s-1|^{1-1/(n+2)}}\right) \ll \exp\exp\left(\frac{1}{|s-1|}\right).$$
Thus, the estimate
$$\sup_{n \leq x} a_n \ll x^{(\sigma(x)+1)/2}\sqrt{\log x}R_{\infty}(2x)$$
holds, where $a_n$ are the coefficients of $M_G(\zeta^{(n)}(s))$ and $R(x)=e^{e^x}$.

On the other hand, by the Theorem 3, for any positive integer $k$ the inequality
$$\sup_{n \leq x} a_n \gg x^{\log k \frac{\log x}{\log\log x}-G^*(k)}$$
is true. Now, notice that $R_\infty(x) \ll \exp(\frac{2\log x}{\log\log x})$, as
$$R_\infty(x)\leq x^{\frac{1}{\log\log x-2\log\log\log x}}R(\log\log x-2\log\log\log x)=$$ $$=\exp\left(\frac{\log x}{\log\log x-2\log\log\log x}+\frac{\log x}{\log\log^2 x}\right).$$
Furthermore, for large enough $x$ the function $G(x)$ is differentiable and convex. Hence, the maximum of the quantity $kx-G(x)$ is attained in the unique point $x_k$ with $G'(x_k)=k$. Consequently,
$$G^*(k) \leq kx_k.$$
Let us now estimate the quantity $x_k$.

It is easy to see that
$$G'(z)=G(z)\frac{z^{-(\varepsilon-2\delta)/(2+\varepsilon-2\delta)}}{1+\varepsilon/2-\delta}$$
Therefore,
$$\exp\left(\exp\left(\frac{\log x_k}{1+\varepsilon/2-\delta}\right)\right)=k(1+\varepsilon/2-\delta)x_k^{1-\frac{2}{2+\varepsilon-2\delta}}.$$
Taking the logarithms and using the fact that $\log x_k=O(\log\log k)$, we get
$$\log x_k=(1+\varepsilon/2-\delta)\log\log k+O\left(\frac{\log\log k}{\log k}\right).$$
Consequently,
$$x_k=(\log k)^{1+\varepsilon/2-\delta}\left(1+O\left(\frac{\log\log k}{\log k}\right)\right).$$
Choose now $k=\left[\frac{\log x}{(\log\log x)^{2+\varepsilon/2-\delta}}\right]=\frac{\log x}{(\log\log x)^{2+\varepsilon/2-\delta}}+O(1)$.

Then we have
$$(\log x)\frac{\log\log\log x}{\log\log x}+O\left(\frac{\log x}{\log\log x}\right)$$
$$ \geq \log x-(2+\varepsilon/2-\delta/2)(\log x) \frac{\log\log\log x}{\log\log x},$$
thus,
$$x^{(\sigma(x)+1)/2}\sqrt{\log x}R_\infty(2x) \geq \sup_{n \leq x} a_n \geq \exp\left(\log x-(2+\varepsilon-\delta/2)\log x\frac{\log\log\log x}{\log\log x}\right).$$
On the other hand, we have
$$\sqrt{\log x}R_\infty(2x) \ll \exp\left(\frac{\delta}{3}(\log x)\frac{\log\log\log x}{\log\log x}\right).$$
So,
$$\exp\left(\log x-(2+\varepsilon-\delta/2)(\log x)\frac{\log\log\log x}{\log\log x}\right)\ll x^{(\sigma(x)+1)/2}\sqrt{\log x}R_\infty(2x) $$
$$\ll
\exp\left(\log x-(2+\varepsilon-\delta/3)(\log x)\frac{\log\log\log x}{\log\log x}\right),$$
and hence
$$\exp\left(\frac{\delta}{6}(\log x)\frac{\log\log\log x}{\log\log x}\right)\ll 1,$$
which is a contradiction.
This concludes the proof for the first case of our theorem.

Now proceed to the case $F(t)=\exp\left(\exp\left(\frac{\log\log t}{\log\log\log t}\right)\right), \sigma(t)=1-\frac{2+o(1)}{\log\log\log t}$.
As before, assume the contrary and consider the function $f(s)=M_G(\zeta^{(n)}(s)+1)$ with $G(z)=\exp((\log z)\log\log z)$ (we extend this function monotonically from $[e,+\infty]$ to all positive real numbers). As the inequality from Theorem 1 is false, we have $\zeta^{(n)}(s) \leq cF(t)$ in the region $\sigma\geq\sigma(t)$, $t \geq t_0$. Thus, in this subset for large enough $t$ the inequality
$$f(s)=M_G(\zeta^{(n)}(s)+1) \leq e^{G(\log_+(|\zeta^{(n)}(s)|+1))} \leq e^{G(\log  F(t))}$$
holds. But $\log F(t)=\exp\left(\frac{\log\log t}{\log\log\log t}\right)$, $\log\log F(t)=\frac{\log\log t}{\log\log\log t}$ and $\log\log\log F(t)\leq \log\log\log t$, so
$$G(\log F(t))=\exp(\log\log F(t)\log\log\log F(t))\leq \exp\left(\frac{\log\log t}{\log\log\log t}\log\log\log t\right)=\log t.$$
Thus,
$$f(s) \ll e^{G(\log F(t))} \leq t.$$
Furthermore, we have $f(s) \ll R\left(\frac{1}{|s-1|}\right)$ for $s \to 1$. Consequently, by the Theorem 2 we have
$$\sup_{n \leq x} a_n \ll x^{(\sigma(x)+1)/2}\sqrt{\log x}R_\infty(2x),$$
where $f(s)=\sum\limits_{n=1}^{+\infty} a_n n^{-s}$.

On the other hand, due to the Theorem 3 the inequality
$$\sup_{n \leq x} a_n \gg \exp\left(\log k \frac{\log x}{\log\log x}-G^*(k)\right)$$
holds. It remains to get an estimate for $G^*(k)$ and choose $k$ optimally. As before, the function $G(x)$ is differentiable and convex for all sufficiently large $x$, therefore
$$G^*(k) \leq kx_k.$$
But $$G'(z)=\left(\frac{\log\log z}{z}+\frac{1}{z}\right)\exp(\log z\log\log z),$$ hence, $$\log x_k=O\left(\frac{\log k}{\log\log k}\right).$$ Taking the logarithms of the both sides of the relation $G'(x_k)=k$, we deduce
$$\log x_k \log\log x_k+\log(\log\log x_k +1)-\log x_k=\log k.$$
From here it is easy to see that
$$\log x_k=\frac{\log k}{\log\log k}+O\left(\frac{\log k}{(\log\log k)^{3/2}}\right).$$
Now choose positive integer $k$ such that $$k=\log x\exp\left(-\frac{\log\log x}{\log\log\log x}+\frac{\log\log x}{(\log\log\log x)^{4/3}}\right)+O(1).$$ Then $$kx_k\leq\log x\exp\left(-\frac12\frac{\log\log x}{(\log\log\log x)^{4/3}}\right)$$ and $$\log k\frac{\log x}{\log\log x}=\log x-\frac{\log x}{\log\log\log x}+O\left(\frac{\log x}{(\log\log\log x)^{4/3}}\right).$$ Therefore, if $\sigma(x)=1-\frac{2}{\log\log\log x}-\frac{c}{(\log\log\log x)^{4/3}}$ for some large enough positive $c$, then
$$x^{(\sigma(x)+1)/2}\sqrt{\log x}R_\infty(2x)=o\left(\exp\left(\log k\frac{\log x}{\log\log x}-G^*(k)\right)\right),$$
as $$\sqrt{\log x}R_\infty(2x)=O\left(\exp\left(\frac{c\log x}{3(\log\log\log x)^{4/3}}\right)\right)$$ and
$$x^{(\sigma(x)+1)/2}=\exp\left(\log x-\frac{\log x}{\log\log\log x}-\frac{c\log x}{(\log\log\log x)^{4/3}}\right).$$
But this relation cannot hold, because
$$x^{(\sigma(x)+1)/2}\sqrt{\log x}R_\infty(2x) \gg \sup_{n \leq x} a_n \gg \exp\left(\log k\frac{\log x}{\log\log x}-G^*(k)\right).$$
A contradiction.

Let us now consider the case when $F$ is a power of double logarithm. We will assume that $F(t)=(\log\log t)^A$, $A>n+1$ and $\sigma(t)=1-(2+o(1))\frac{\log\log\log t}{\log\log t}$. Choose $G(z)=\exp\exp\left(\frac{z}{A}\right)$. If the Theorem 1 for the pair $(F,\sigma)$ is false, then for $\sigma\geq \sigma(t)$ the inequality $|\zeta^{(n)}(s)| \leq c(\log\log t)^A$ holds. Denote $f(s)=M_G(1+\zeta^{(n)}(s))$. As
$$M_G(z)\ll e^{G(\log_+|z|)}=\exp\left(\exp\left(\exp\left(\frac{\log_+|z|}{A}\right)\right)\right)\ll \exp\exp(|z|^{1/A}),$$
we have for $\sigma\geq \sigma(t)$ the inequality
$$|f(s)| \ll \exp\exp(|\zeta^{(n)}(s)+1|^{1/A}) \ll \exp(\exp(\log\log t))=t.$$
Consequently, from the Theorem 2 we find
$$\sup_{n \leq x} a_n \ll x^{(\sigma(x)+1)/2}\sqrt{\log x}R_\infty(2x),$$
because $f(s)=O\left(R\left(\frac{1}{|s-1|}\right)\right)$ for $s \to 1$ (we assumed that $A>n$). On the other hand, the Theorem 3 implies the lower bound
$$\sup_{n \leq x} a_n \gg \exp\left(\log k\frac{\log x}{\log\log x}-G^*(k)\right).$$
Now, as before we need an upper bound for $G^*(k)$ and an optimal choice for $k$. Once again, $G(x)$ is convex for large enough  $x$, so
$$G^*(k) \leq kx_k.$$
Furthermore, we have $$G'(x)=\frac{1}{A}\exp\left(\frac{x}{A}\right)\exp\exp\left(\frac{x}{A}\right) \geq G(x)$$  for all $x \geq A\log A$. Therefore, $$x_k \leq G^{-1}(k)=A\log\log k$$ for all sufficiently large $k$. Set $$k=\left[\frac{\log x}{\log\log x\log\log\log x}\right].$$ Then $$x_k \leq A\log\log k=O(\log\log\log x),$$ hence, $$\log k\frac{\log x}{\log\log x}-G^*(k)=\log x-\frac{\log x\log\log\log x}{\log\log x}+O\left(\frac{\log x}{\log\log x}\right).$$ Thus, if $$\sigma(x)=1-\frac{2\log\log\log x}{\log\log x}-\frac{c}{\log\log x},$$ where $c$ is large enough, then we get a contradiction, because
$$\sqrt{\log x}R_\infty(2x)=O\left(\exp\left(\frac{2\log x}{\log\log x}\right)\right)$$
and
$$x^{(\sigma(x)+1)/2}=\exp\left(\log x-\log x\frac{\log\log\log x}{\log\log x}-\frac{c\log x}{2\log\log x}\right),$$
therefore,
$$\exp\left(\log x-\log x\frac{\log\log\log x}{\log\log x}-\frac{(c-4)\log x}{2\log\log x}\right) \gg \sup_{n \leq x} a_n \gg$$
$$\exp\left(\log x-\log x \frac{\log\log\log x}{\log\log x}+O\left(\frac{\log x}{\log\log x}\right)\right),$$
which is not the case, as $c$ is arbitrarily large.

It remains to examine the case $$F(t)=\exp\exp((\log\log t)^\alpha),\,\sigma(t)=1-\frac{2+o(1)}{(\log\log t)^{1-\alpha}},\,0<\alpha<1.$$
Choose $G(z)=\exp((\log z)^{1/\alpha})$. If the Theorem 1 is not true, then $|\zeta^{(n)}(s)| \leq cF(t)$ for $\sigma\geq \sigma(t)$. Let $f(s)=M_G(1+\zeta^{(n)}(s))$. Then we have
$$|f(s)| \ll \exp(G(\log_+|\zeta^{(n)}(s)+1|)) \ll \exp(G(\log F(t))).$$
 But $\log F(t)=\exp((\log\log t)^\alpha)$, $\log\log F(t)=(\log\log t)^\alpha,$ thus,
$$G(\log F(t))=\exp((\log\log F(t))^{1/\alpha}))=\exp(\log\log t)=\log t,$$
therefore
$$|f(s)| \ll t$$
for $\sigma>\sigma(t)$. As in the previous cases, for $s \to 1$ the inequality
$$|f(s)|=O\left(R\left(\frac{1}{|s-1|}\right)\right),$$
holds. Consequently, the conditions of the Theorem 2 are satisfied and thus,
$$\sup_{n \leq x} a_n \ll x^{(\sigma(x)+1)/2}\sqrt{\log x}R_\infty(2x),$$
where $a_n$ are Dirichlet coefficients of $f(s)$. But the Theorem 3 gives us the lower bound for the same quantity: for any positive integer $k$ we have
$$\sup_{n \leq x} a_n \gg \exp\left(\log k \frac{\log x}{\log\log x}-G^*(k)\right).$$
As always, $G(x)$ is convex for large $x$, therefore
$$G^*(k) \leq kx_k.$$
Differentiating $G$, we find
$$\exp((\log x_k)^{1/\alpha})=\alpha kx_k (\log x_k)^{1-1/\alpha},$$
so
$$(\log x_k)^{1/\alpha}=\log k+\log x_k +O(\log\log k)$$
and
$$\log x_k=(\log k)^\alpha+O((\log k)^{2\alpha-1}).$$
Now, set $k=\left[\log x\exp(-(\log\log x)^{\alpha}-(\log\log x)^{(3\alpha-1)/2})\right]$. Then
$$kx_k=\exp(\log k+\log x_k)=\exp(\log\log x-(\log\log x)^{(3\alpha-1)/2}+O((\log\log x)^{2\alpha-1}))$$
and
$$\log k=\log\log x-(\log\log x)^{\alpha}-(\log\log x)^{(3\alpha-1)/2}+O(1),$$
thus,
$$\sup_{n \leq x} a_n \gg \exp\left((\log k)\frac{\log x}{\log\log x}-G^*(k)\right)\gg \exp\left(\log x-\frac{\log x}{(\log\log x)^{1-\alpha}}(1+o(1))\right).$$
Consequently, the choice $$\sigma(t)=1-\frac{2+o(1)}{(\log\log x)^{1-\alpha}}$$ leads us to the contradiction with the upper bound, which concludes the proof of the Theorem 1.
\end{proof}

\section{Conclusion}

So, with the help of the theorems 2 and 3 we managed to prove a number of omega-theorems for the Riemann zeta function and its derivatives in the regions of the critical strip near the line $\mathrm{Re}\,s=1$. The level of generality of the theorems 2 and 3 also allows to prove omega-theorems for other $L-$functions with nonnegative coefficients. For example, using the Chebotarev density theorem one can prove an analogue of the Theorem 3 which applies to the Dedekind zeta functions of number fiels. Unfortunately, our methods do not provide any nontrivial results about the domains of the form $\sigma\geq \sigma(t)$ with $\sigma(t)=1-o\left(\frac{\log\log\log t}{\log\log t}\right)$ and thus, to prove omega-theorems in this domains, some new ideas are needed.
\end{fulltext}

\end{document}